\newcommand{\Q}{{\mathbb Q}}
\newtheorem{theorem}{Theorem}
\newtheorem{proposition}[theorem]{Proposition}
\newtheorem{corollary}[theorem]{Corollary}
\newtheorem{lemma}[theorem]{Lemma}
\theoremstyle{definition}
\newtheorem{definition}[theorem]{Definition}
\newtheorem{example}[theorem]{Example}
\newtheorem{convention}[theorem]{Convention}
\newtheorem{problem}[theorem]{Problem}
\title{New Graphs  of Finite Mutation Type}
\author{Harm Derksen and Theodore Owen}
\thanks{This first author is partially supported by NSF grant DMS 0349019. 
This grant also supported the REU research project of the second author
on which this paper is based.}
\begin{document}
\begin{abstract}
To a directed graph without loops and $2$-cycles, we can associate a
skew-symmetric matrix with integer entries.
Mutations of such skew-symmetric matrices, and more generally skew-symmetrizable matrices,
have been defined in the context of cluster algebras by Fomin and Zelevinsky.
The mutation class of a graph $\Gamma$ is the set of all isomorphism classes
of graphs that can be obtained from $\Gamma$ by a sequence of mutations.
A graph is called mutation-finite if its mutation class is finite. 
Fomin, Shapiro and Thurston
constructed mutation-finite graphs from triangulations of oriented bordered surfaces
with marked points. We will call such graphs ``of geometric type''. Besides
graphs with $2$ vertices, and graphs of geometric type, there are only 9
other ``exceptional'' mutation classes that are known to be finite. In this paper
we introduce 2 new exceptional finite mutation classes.
\end{abstract}

\maketitle
Cluster algebras were introduced by Fomin and Zelevinsky in \cite{FZ1,FZ2} to create
an algebraic framework for total positivity nd canonical bases in semisimple algebraic groups.

An $n\times n$ matrix $B=(b_{i,j})$  is called {\em skew symmetrizable}
if there exists nonzero $d_1,d_2,\dots,d_n$ such that $d_ib_{i,j}=-d_jb_{j,i}$ for all $i,j$.
An {\em exchange matrix} is a skew-symmetrizable matrix with integer entries.

A {\em seed} is a pair $({\bf x}, B)$ where
$B$ is an echange matrix  and
${\bf x}=\{x_1,x_2,\dots,x_n\}$ is a set of $n$ algebraically independent elements.
For any $k$ with $1\leq k\leq n$ we define another seed
$({\bf x}',B')=\mu_k({\bf x},B)$ as follows. The matrix $B'=(b_{i,j}')$ is given by
$$
b_{i,j}'=\left\{\begin{array}{ll}
-b_{i,j}  & \mbox{if $i=k$ or $j=k$;}\\
b_{i,j}+[b_{i,k}]_+[b_{k,j}]_+-[-b_{i,k}]_+[-b_{k,j}]_{+} &
\mbox{otherwise.}\end{array}\right.
$$
Here, $[z]_+=\max\{z,0\}$ denotes the positive part of a real number $z$. Define
$$
{\bf x}'=\{x_1,x_2,\dots,x_{k-1},x_k',x_{k+1},\dots,x_n\}
$$
where $x_k'$ is given by
$$
x_k'=\frac{\prod_{i=1}^n x_i^{[b_{i,k}]_+}+\prod_{i=1}^n x_i^{[-b_{i,k}]_+}}{x_k}.
$$
Note that $\mu_k$ is an involution.
Starting with an initial seed $({\bf x},B)$ one can construct many seeds by
applying sequences of mutations. If $({\bf x}',B')$ is obtained from
the initial seed $({\bf x},B)$ by a sequence of mutations, then ${\bf x}'$ is called
a cluster, and the elements of ${\bf x}'$ are called {\em cluster variables}. 
The {\em cluster algebra} is the commutative subalgebra of $\Q(x_1,x_2,\dots,x_n)$ generated 
by all cluster variables.
A cluster algebra is called {\em of finite  type} if there are only 
finitely many seeds that can be obtained from the initial seed by sequences of
mutations. Cluster algebras of finite type were classfied in \cite{FZ2}.
\begin{example}[Cluster algebra of type ${\bf A}_1$]
If we start with the initial seed $({\bf x},B)$ where
${\bf x}=\{x_1,x_2\}$ and 
$$
B=\begin{pmatrix}
0 & -1\\
1 & 0
\end{pmatrix}
$$
Using mutations we get
$$
\{x_1,x_2\},\begin{pmatrix}
0 & 1\\
-1 & 0
\end{pmatrix}
\leftrightarrow
\{\frac{1+x_2}{x_1},x_2\},
\begin{pmatrix}
0 & -1\\
1 & 0
\end{pmatrix}
\leftrightarrow
\{\frac{1+x_2}{x_1},
\frac{1+x_1+x_2}{x_1,x_2}\},
\begin{pmatrix}
0 & 1\\
-1 & 0
\end{pmatrix}\leftrightarrow
$$
$$
\leftrightarrow
\{\frac{1+x_1}{x_2},
\frac{1+x_1+x_2}{x_1,x_2}\},
\begin{pmatrix}
0 & -1\\
1 & 0
\end{pmatrix}
\leftrightarrow
\{\frac{1+x_1}{x_2},
x_1\}
\begin{pmatrix}
0 & 1\\
-1 & 0
\end{pmatrix}
\leftrightarrow
\{x_2,x_1\},
\begin{pmatrix}
0 & -1\\
1 & 0
\end{pmatrix}
$$
The last seed 
$$
\{x_2,x_1\},
\begin{pmatrix}
0 & -1\\
1 & 0
\end{pmatrix}
$$
is considered the same as the initial seed. We just need
to exchange $x_1$ and $x_2$ (and accordingly swap the 2 rows
and swap the 2 columns in the exchange matrix) to get the initial seed.
\end{example}
A cluster algebra is called {\em mutation-finite} if only finitely many exchange matrices
appear in the seeds. Obviously a cluster algebra of finite type is mutation-finite.
But the converse is not true. For example, the exchange matrix
$$
B=\begin{pmatrix}
0 & -2\\
2 & 0
\end{pmatrix}
$$
gives a cluster algebra that is not of finite type. However, 
the only exchange matrix that apears is $B$ (and $-B$, but $-B$
is the same as $B$ after swapping the 2 rows and swapping  the  2 columns).

In this paper we will only consider exchange matrices that are already skew-symmetric.
To  a skew-symmetric $n\times n$ matrix $B=(b_{i,j})$ we can associate a directed graph $\Gamma(B)$ as follows. The vertices of the graph are labeled by $1,2,\dots,n$.
If $b_{i,j}>0$, draw $b_{i,j}$ arrows from $j$ to $i$. Any finite directed graph without
loops and 2 cycles can be obtained from a skew-symmetric exchange matrix in this way.
We can understand mutations
in terms of the graph. If $\Gamma=\Gamma(B)$ then $\mu_k\Gamma:=\Gamma(\mu_kB)$
is obtained from $\Gamma$ as follows. Start with $\Gamma$. For every incoming arrow $a:i\to k$ at $k$
and every outgoing arrow $b:k\to j$, draw a new composition arrow $ba:i\to j$.
Then, revert every arrow that starts or ends at $k$.
The graph now may have $2$-cycles. Discard $2$-cycles until there are now
more $2$-cycles left. The resulting graph is $\mu_k\Gamma$.
Two graphs are called {\em mutation-equivalent}, if one is obtained from the other
by a sequence of mutations and relabeling of the vertices.
The {\em mutation class} of a graph $\Gamma$ is the set of all isomorphism classes of graphs
that are mutation equivalent to $\Gamma$. A graph is mutation-finite if its mutation
class is finite.
\begin{convention}
In this paper, a subgraph of a directed graph $\Gamma$ will always mean a {\em full} subgraph,
i.e., for every two vertices $x,y$ in the subgraph, the subgraph also will contain
all arrows from $x$ to $y$.
\end{convention}

\section{Known mutation-finite connected graphs}
It is easy to see that a graph $\Gamma$ is mutation-finite if and only if each
of its connected components is mutation finite. We will discuss all known
examples of graphs of finite mutation type.
\subsection{Connected graphs with 2 vertices}
Let $\Theta(m)$ be the graph with two vertices $1,2$ and $m\geq 1$ arrows from $1$ to $2$.
The mutation class of $\Theta(m)$ is just the isomorphism class of $\Theta(m)$ itself.
So $\Theta(m)$ is mutation-finite. 
$$
\Theta(3):\xymatrix{\bullet \ar@<.5em>[r]\ar[r]\ar@<-.5em>[r] & \bullet}
$$
\subsection{Graphs from cluster algebras of finite type.}
An exchange matrix of a cluster algebra of finite type is mutation finite. The cluster
algebras of finite type were classified in \cite{FZ2}. This classification goes parallel
to the classification of simple Lie algebras, there are types
$${\bf A}_n,{\bf B}_n,{\bf C}_n,{\bf D}_n,{\bf E}_6,{\bf E}_7,{\bf E_8},{\bf F}_4,{\bf G}_2.$$
The types with a skew-symmetric exchange graph correspond
to the simply laced Dynkin diagrams  ${\bf A}_n,{\bf D}_n,{\bf E}_6,{\bf E}_7,{\bf E}_8$:
$$
\begin{array}{rl}
{\bf A}_n: & 
\xymatrix{
\bullet \ar[r] & \bullet \ar[r] & \cdots\ar[r] & \bullet}\\ \\
{\bf D}_n: & 
\xymatrix{
& \bullet\ar[d] &  & &\\ 
\bullet \ar[r]& \bullet \ar[r] & \cdots \ar[r] & \bullet}\\ \\
{\bf E}_6: &
\xymatrix{
& &\bullet\ar[d] & &\\ 
\bullet\ar[r] & \bullet\ar[r] &\bullet & \bullet\ar[l] & \bullet\ar[l]}\\ \\
{\bf E}_7: &
\xymatrix{
& &\bullet\ar[d] & & &\\  
\bullet\ar[r] & \bullet\ar[r] &\bullet & \bullet\ar[l] & \bullet\ar[l] & \bullet \ar[l]}\\ \\
{\bf E}_8: &
\xymatrix{
& &\bullet\ar[d] & & & &\\ 
\bullet\ar[r] & \bullet\ar[r] &\bullet & \bullet\ar[l] & \bullet\ar[l] & \bullet \ar[l] & \bullet\ar[l]}
\end{array}
$$
The orientation of the arrows here were chosen somewhat arbitrarily. For each
diagram, a different choice of the orientation will still give the same mutation class.
\subsection{Graphs from extended Dynkin diagrams}
In \cite{AR} it was shown that a connected directed graph without oriented cycles is of finite mutation
type if and only if it has at 2 vertices (the graphs $\Theta(m)$, $m\geq 1$) or
the underlying undirected graph is an extended Dynkin diagrams. 
The type ${\bf D}$ and ${\bf E}$ extended Dynkin diagrams give
rise to the following finite mutation classes:
$$
\begin{array}{rl}
\widehat{\bf D}_n: & 
\xymatrix{
& \bullet\ar[d] &  &\bullet\ar[d] & \\
\bullet \ar[r]& \bullet \ar[r] & \cdots \ar[r] & \bullet\ar[r] & \bullet}\\ \\
\widehat{\bf E}_6: &
\xymatrix{
& & \bullet\ar[d] & & \\
& &\bullet\ar[d] & &\\
\bullet\ar[r] & \bullet\ar[r] &\bullet & \bullet\ar[l] & \bullet\ar[l]}\\ \\
\widehat{\bf E}_7: &
\xymatrix{
& & &\bullet\ar[d] & & &\\
\bullet\ar[r] & \bullet\ar[r] & \bullet\ar[r] &\bullet & \bullet\ar[l] & \bullet\ar[l] & \bullet \ar[l]}\\ \\
\widehat{\bf E}_8: &
\xymatrix{
& &\bullet\ar[d] & & & & &\\
\bullet\ar[r] & \bullet\ar[r] &\bullet & \bullet\ar[l] & \bullet\ar[l] & \bullet \ar[l] & \bullet\ar[l] & \bullet\ar[l]}
\end{array}
$$
Again, for these types, a different choice for the orientations of the arrows still give
the same mutation class.
The diagram for $\widehat{\bf A}_{n}$ is an $(n+1)$-gon. If all arrows go clockwise 
or all arrows go counterclockwise, then we get the mutation class of ${\bf D}_n$.
Let $\widehat{\bf A}_{p,q}$ be the mutation class of the graph where $p$ arrows
go counterclockwise and $q$ arrows go clockwise, where $p\geq q\geq 1$.
For the mutation class it does  not matter which arrows are chosen
to be counterclockwise and with ones are chosen counterclockwise.
\subsection{Graphs coming from triangulations of surfaces}
In \cite{FST} the authors construct cluster algebras from bordered oriented surfaces
with marked points. These cluster algebras are always of finite mutation type.
The exchange matrices for these types are skew-symmetric.
The mutation-finite graphs that come from oriented bordered surfaces with
marked points will be called {\em of geometric type}.
In \S13 of that paper, the authors give a description of the graphs of geometric type.

A {\em block} is one of the diagrams below:
$$
\begin{array}{rlrlrl}
\mbox{I:} &
\xymatrix{
\circ\ar[r] & \circ}
& 
\mbox{II:} &
\xymatrix{ & \circ\ar[rd] & \\
\circ\ar[ru] & & \circ\ar[ll]} &
\mbox{IIIa:} &
\xymatrix{ & \circ & \\
\bullet \ar[ru] & & \bullet\ar[lu]} \\ \\
\mbox{IIIb:} &
\xymatrix{ & \circ\ar[rd]\ar[ld] &\\
\bullet & & \bullet }  &
\mbox{IV:}
&
\xymatrix{ & \bullet\ar[rd] & \\
\circ\ar[ru]\ar[rd] & & \circ\ar[ll]\\
& \bullet\ar[ru] & }
&
\mbox{V:} &
\xymatrix{\bullet \ar[rd] & & \bullet \ar[dd]\ar[ll]\\
& \circ \ar[ru]\ar[ld] & \\
\bullet\ar[uu]\ar[rr] & & \bullet\ar[lu]}\end{array}
$$
Start with a disjoint union of blocks (every type may appear several times)
and choose a partial matching of the open vertices ($\circ$).
No vertex should be matched to  a vertex of the same block. 
Then construct a new graph by identifying the vertices that are matched to each other.
If in the resulting graph there are two vertices $x$ and $y$ with an arrow from $x$ to $y$
and an arrow from $y$ to $x$, then we omit both arrows (they cancel each other out).
A graph constructed in this way is called {\em block decomposable}. 
Fomin, Shapiro and Thurston prove in \cite[\S13]{FST}  that a graph is block decomposable if and only if the graph is of geometric type.

For example
$$
\xymatrix{
\bullet\ar[rd] & & & & & & &\bullet\\
& \circ\ar@{.}[r] &\circ\ar[r] & \circ\ar@{.}[r] & \circ\ar[r] & \ar@{.}[r] & \circ\ar[ru]\ar[rd] & \\
\bullet\ar[ru] &&&&&&& \bullet}
$$
gives
$$
\xymatrix{
\bullet\ar[rd] & & & & \bullet\\
& \bullet\ar[r] & \bullet \ar[r] & \bullet \ar[ru]\ar[rd] & \\
\bullet\ar[ru] & & & & \bullet}
$$
of type $\widehat{\bf D}_6$. It is easy to see that all graphs of type ${\bf A}_n,{\bf D}_n,
{\widehat {\bf A}}_{p,q},\widehat{\bf D}_n$ are block decomposable.
The partial matching
$$
\xymatrix{
& \circ\ar[rd]\ar[ld]\ar@{.}[rr] & & \circ\ar[dd] &\\
\bullet\ar[rd] & & \bullet\ar[ld] & & \circ\ar[lu]\\
& \circ \ar[uu]\ar@{.}[rr] & & \circ\ar[ru] &}
$$
yields the block decomposable graph
$$
\xymatrix{ & \bullet\ar[rd]\ar[ld] & \\
\bullet \ar[rd] & \bullet \ar[u] & \bullet \ar[ld]\\
& \bullet\ar[u] & }.
$$
\subsection{Graphs of extended affine types}
The following graphs are also of finite mutation type:
$$
\begin{array}{rl}
{\bf E}_6^{(1,1)}: &
\xymatrix{
& & \bullet\ar[ld]\ar[rd]\ar[rrrd]  & & &  &\\ 
\bullet \ar[r] & \bullet\ar[rd] & &  \bullet\ar[ld] & \bullet\ar[l] & \bullet\ar[llld] & \bullet\ar[l]\\
& & \bullet\ar@<.5ex>[uu]\ar@<-.5ex>[uu] & &  &  &} \\
{\bf E}_7^{(1,1)}: &
\xymatrix{
& & & \bullet\ar[ld]\ar[rd]\ar[rrd]   & & &  &\\ 
\bullet\ar[r] & \bullet \ar[r] & \bullet\ar[rd] & &  \bullet\ar[ld]  & \bullet\ar[lld] & \bullet\ar[l] & \bullet\ar[l]\\
& & & \bullet\ar@<.5ex>[uu]\ar@<-.5ex>[uu] &  & &  &}\\
{\bf E}_8^{(1,1)}: &
\xymatrix{
& & \bullet\ar[ld]\ar[rd]\ar[rrd]   & & &  & & &\\ 
 \bullet \ar[r] & \bullet\ar[rd] & &  \bullet\ar[ld]  & \bullet\ar[lld] & \bullet\ar[l] & \bullet\ar[l] & \bullet\ar[l] & \bullet \ar[l]\\
 & & \bullet\ar@<.5ex>[uu]\ar@<-.5ex>[uu] &  & & & &  &} 

 \end{array}
$$
These graphs are orientations  of the  Dynkin diagrams of  extended affine roots systems
first described by Saito  (see~ \cite[Table 1]{Saito}). The connection between extended
affine root systems and cluster combinatorics was first noticed by Geiss, Leclerc, and Schro\"er in
\cite{GLS}. It was shown using that these graphs are of finite mutation type by
an exhaustive computer search using the {\tt Java} applet for matrix mutations
written by Bernhard Keller (\cite{Keller}) and Lauren Williams.
\subsection{Summary}
All the known quivers of finite mutation type can be summerized as follows:
\begin{enumerate}
\item graphs of geometric type,
\item graphs with 2 vertices,
\item graphs in the 9 exceptional mutation classes
$$
{\bf E}_6,{\bf E}_7,{\bf E}_8,\widehat{\bf E}_6, \widehat{\bf E}_7,\widehat{\bf E}_8,
{\bf E}_6^{(1,1)},{\bf E}_7^{(1,1)},{\bf E}_8^{(1,1)}.
$$
\end{enumerate}
Fomin, Shapiro and Thurston asked to following question (see~\cite[Problem 12.10]{FST})\footnote{In the statement
of Problem 12.10 in \cite{FST}, the authors accidentally wrote $n\geq 2$ instead of $n\geq 3$. }.
\begin{problem}
Are these {\em all} connected graphs of finite mutation type? If not, are there only
finitely many exceptional finite mutation classes?
\end{problem}
In the next section, we will introduce 2 new mutation classes of finite type.

\section{New exceptional graphs of finite mutation-type}
\begin{proposition}
The following two graphs are of finite mutation type:
$$
\begin{array}{rl}
{\bf X}_6: & 
\xymatrix{
& \bullet\ar[rd] & & \bullet\ar@<.5ex>[rd]\ar@<-.5ex>[rd] \\
\bullet\ar@<.5ex>[ru]\ar@<-.5ex>[ru] & & \bullet\ar[ll]\ar[ru] & & \bullet\ar[ll]\\
& &\bullet \ar[u]& & }\\
{\bf X}_7: & 
\xymatrix{
& \bullet\ar[rd] & & \bullet\ar@<.5ex>[rd]\ar@<-.5ex>[rd] \\
\bullet\ar@<.5ex>[ru]\ar@<-.5ex>[ru] & & \bullet\ar[ll]\ar[ru]\ar[rd] & & \bullet\ar[ll]\\
& \bullet\ar[ru] && \bullet\ar@<.5ex>[ll]\ar@<-.5ex>[ll] & }\end{array}
$$
\end{proposition}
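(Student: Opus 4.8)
The plan is to prove the proposition by computing the two mutation classes explicitly, after first reducing the two statements to one. The key preliminary observation is that mutation-finiteness is inherited by full subgraphs: if $\Gamma'$ is the full subgraph of $\Gamma$ on a set $S$ of vertices and $k\in S$, then the recipe for $\mu_k$ (composition arrows through $k$, reversal of the arrows at $k$, cancellation of $2$-cycles) only refers to arrows incident with $k$ and to arrows between neighbours of $k$, all of which already lie in $\Gamma'$. Hence $(\mu_k\Gamma)|_S=\mu_k(\Gamma')$, so every graph in the mutation class of $\Gamma'$ is the restriction to $S$ of some graph in the mutation class of $\Gamma$; in particular, if the latter class is finite so is the former. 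Now label the central (valence-six) vertex of ${\bf X}_7$ by $D$ and the lower-right vertex by $G$; then the full subgraph of ${\bf X}_7$ on the remaining six vertices is exactly ${\bf X}_6$. Therefore it suffices to prove that ${\bf X}_7$ is of finite mutation type.

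Next I would exploit the structure of ${\bf X}_7$. It is obtained by gluing three copies of the triangle $D\to u\Rightarrow v\to D$ (two single arrows and one double arrow) along the common vertex $D$, so there is an $S_3$-action permuting the three triangles, and up to isomorphism only a handful of mutations need to be examined. Mutating ${\bf X}_7$ at any vertex $u\ne D$ merely interchanges, inside the triangle through $u$, the single arrows and the doubled edge, while leaving the other two triangles intact; the result is again isomorphic to ${\bf X}_7$. Mutating at the hub $D$ produces a single new graph $Y:=\mu_D({\bf X}_7)$, which one writes out explicitly: all arrows of $Y$ turn out to be simple, the only $2$-cycle cancellations being the three former doubled edges.

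It then remains to check that the two-element set $\{{\bf X}_7,\,Y\}$ is closed under all mutations. Since $\mu_D$ is an involution we have $\mu_D(Y)={\bf X}_7$; and $Y$ again carries an $S_3$-symmetry, so one only has to compute a mutation of $Y$ at a single ``spoke'' vertex and a single ``hub-adjacent'' vertex, each of which is found to return a graph isomorphic to $Y$ (after relabelling). Hence $\{{\bf X}_7,Y\}$ is the entire mutation class of ${\bf X}_7$, which is therefore finite; and by the first paragraph ${\bf X}_6$ is of finite mutation type as well. (The mutation class of ${\bf X}_6$, which one may compute separately if desired, turns out to be finite of its own accord for the same reasons.)

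The main obstacle is the bookkeeping in the last two paragraphs: performing $\mu_D$ at a valence-six vertex, recording $Y$ correctly, and then checking that each further mutation of ${\bf X}_7$ and of $Y$ lands in $\{{\bf X}_7,Y\}$ up to relabelling of the vertices. None of this is deep, but it is error-prone by hand, so in practice one carries it out (and double-checks it) with Bernhard Keller's matrix-mutation applet \cite{Keller}, or equivalently by a short breadth-first search through the mutation class that tests the resulting graphs for isomorphism. The content of the proposition is precisely that this search terminates.
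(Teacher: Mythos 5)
Your proposal is correct and is essentially the paper's proof: both rest on an explicit, finite enumeration of the mutation class (the paper simply displays the $5$ graphs mutation-equivalent to ${\bf X}_6$ and the $2$ graphs mutation-equivalent to ${\bf X}_7$, verified by hand or with Keller's applet), and your computation of $Y=\mu_D({\bf X}_7)$ and the closure of $\{{\bf X}_7,Y\}$ matches the paper's displayed two-element class. The only organizational difference is that you deduce the finiteness for ${\bf X}_6$ from that of ${\bf X}_7$ via the full-subquiver lemma and use the $S_3$-symmetry to cut down the bookkeeping, whereas the paper computes both classes directly; this is a mild streamlining, not a different method.
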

\begin{proof}
This is easy to verify by hand or by using the applet~\cite{Keller}.
The mutation classes for ${\bf X}_6$ and ${\bf X}_7$ are suprisingly small. The
mutation class of $X_6$ consists of the following $5$ graphs:
$$
\begin{array}{cc}
\xymatrix{
& \bullet\ar[rd] & & \bullet\ar@<.5ex>[rd]\ar@<-.5ex>[rd] \\
\bullet\ar@<.5ex>[ru]\ar@<-.5ex>[ru] & & \bullet\ar[ll]\ar[ru]\ar[d] & & \bullet\ar[ll]\\
& &\bullet & & } &
\xymatrix{
& \bullet\ar[rd] & & \bullet\ar@<.5ex>[rd]\ar@<-.5ex>[rd] \\
\bullet\ar@<.5ex>[ru]\ar@<-.5ex>[ru] & & \bullet\ar[ll]\ar[ru] & & \bullet\ar[ll]\\
& &\bullet\ar[u] & & } \\ \\
\xymatrix{
& \bullet\ar[ld]\ar[rd] & \\
\bullet \ar[r]\ar[d] & \bullet\ar[u]\ar[ld]\ar[rd] & \bullet\ar[l]\ar[d]\\
\bullet\ar[rru] & & \bullet\ar[llu]} &
\xymatrix{
& \bullet\ar[d] & \\
\bullet\ar[ru]\ar[rrd] & \bullet\ar[l]\ar[r] & \bullet\ar[lu]\ar[lld] \\
\bullet\ar[u]\ar[ru] & & \bullet \ar[lu]\ar[u]} \\ \\
\xymatrix{
& \bullet \ar[r]\ar[ld] & \bullet\ar[ldd] & \\
\bullet\ar[rrr] & & & \bullet \ar[lu]\ar[ld]\\
& \bullet\ar[lu]\ar[r] & \bullet\ar[luu] & } &
\end{array}
$$
The mutation class of ${\bf X}_7$ consists of the following 2 graphs:
$$
\begin{array}{cc}
\xymatrix{
& \bullet\ar[rd] & & \bullet\ar@<.5ex>[rd]\ar@<-.5ex>[rd] \\
\bullet\ar@<.5ex>[ru]\ar@<-.5ex>[ru] & & \bullet\ar[ll]\ar[ru]\ar[rd] & & \bullet\ar[ll]\\
& \bullet\ar[ru] && \bullet\ar@<.5ex>[ll]\ar@<-.5ex>[ll] & } &
\xymatrix{
& \bullet\ar@/^1pc/[rrdd]\ar[ld] &&  \bullet\ar[ll]\ar[ld] & \\
 \bullet\ar[rd]\ar[rr] & & \bullet \ar[lu]\ar[rr]\ar[ld] & & \bullet \ar[lu]\ar@/^1pc/[llll]\\
& \bullet\ar@/^1pc/[rruu]\ar[rr] & & \bullet\ar[lu]\ar[ru]}
\end{array}
$$
\end{proof}
\begin{corollary}
The graphs ${\bf X}_6$ and ${\bf X}_7$ are not mutation-equivalent
to 
$$
{\bf E}_6,{\bf E}_7,{\bf E}_8,\widehat{\bf E}_6, \widehat{\bf E}_7,\widehat{\bf E}_8,
{\bf E}_6^{(1,1)},{\bf E}_7^{(1,1)},{\bf E}_8^{(1,1)}.$$
\end{corollary}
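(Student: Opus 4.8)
\emph{Proof plan.} The strategy is to peel off the nine exceptional classes using successively finer mutation invariants, beginning with the crudest one: a mutation (and a relabeling) never changes the number of vertices, so it is enough to compare ${\bf X}_6$ and ${\bf X}_7$ with the exceptional graphs having the same number of vertices. Counting nodes in the diagrams of Section~1, the nine exceptional graphs have $6$ (${\bf E}_6$), $7$ (${\bf E}_7$ and $\widehat{\bf E}_6$), $8$ (${\bf E}_8$, $\widehat{\bf E}_7$, ${\bf E}_6^{(1,1)}$), $9$ ($\widehat{\bf E}_8$, ${\bf E}_7^{(1,1)}$), and $10$ (${\bf E}_8^{(1,1)}$) vertices. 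Since ${\bf X}_6$ has $6$ vertices and ${\bf X}_7$ has $7$, the only pairs that remain to be separated are $({\bf X}_6,{\bf E}_6)$, $({\bf X}_7,{\bf E}_7)$, and $({\bf X}_7,\widehat{\bf E}_6)$.

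For the first two pairs I would invoke the classification of cluster algebras of finite type \cite{FZ2}. The diagrams ${\bf E}_6$ and ${\bf E}_7$ are Dynkin diagrams, hence their exchange matrices are of finite type; and it is a consequence of that classification that if a skew-symmetric exchange matrix is of finite type, then every matrix in its mutation class has all off-diagonal entries in $\{-1,0,1\}$ — equivalently, every graph in the mutation class of a finite-type graph has only simple arrows. But ${\bf X}_6$ and ${\bf X}_7$ each contain a double arrow and lie in their own mutation classes, so ${\bf X}_6$ is not mutation-equivalent to ${\bf E}_6$ and ${\bf X}_7$ is not mutation-equivalent to ${\bf E}_7$.

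The remaining pair $({\bf X}_7,\widehat{\bf E}_6)$ is the only one that escapes both the vertex count and the finite-type argument, and it is where the content of the Proposition is actually needed. The Proposition exhibits the mutation class of ${\bf X}_7$ as an explicit two-element set, so it suffices to observe that $\widehat{\bf E}_6$ is isomorphic to neither member: the graph ${\bf X}_7$ itself has a double arrow, whereas $\widehat{\bf E}_6$ has none; and the underlying undirected graph of the second member contains a cycle (indeed the directed graph contains an oriented $3$-cycle), whereas $\widehat{\bf E}_6$ is a tree. Hence $\widehat{\bf E}_6$ does not belong to the mutation class of ${\bf X}_7$, so the two are not mutation-equivalent. (Equivalently, a short computation with the applet \cite{Keller} shows that the mutation class of $\widehat{\bf E}_6$ has more than two elements, which already forces it to differ from the two-element class of ${\bf X}_7$.) I do not expect any genuine obstacle here: once the Proposition supplies the two small mutation classes, everything else is bookkeeping; the only subtlety is that three of the exceptional graphs happen to have the same vertex count as ${\bf X}_6$ or ${\bf X}_7$, so the cheap invariant alone does not finish the argument.
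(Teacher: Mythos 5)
Your proposal is correct and takes essentially the same approach as the paper: the paper's proof is simply the one-line observation that none of the nine exceptional graphs appears in the explicitly computed mutation classes of ${\bf X}_6$ and ${\bf X}_7$ from the Proposition, and your argument is a careful organization of exactly that verification (using the vertex count, the presence of multiple arrows, and tree-ness as quick invariants, plus the finite-type classification as an optional shortcut for the ${\bf E}_6$ and ${\bf E}_7$ cases). All the vertex counts and the final comparison with the two-element class of ${\bf X}_7$ check out.
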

\begin{proof}
The reader easily verifies that none of these $9$ graphs are in the mutation
classes of ${\bf X}_6$ or ${\bf X}_7$.
\end{proof}
\begin{proposition}
The graphs ${\bf X}_{6}$ and ${\bf X}_7$ are not block decomposable. In particular,
these graphs do not come from oriented surfaces with marked points.
\end{proposition}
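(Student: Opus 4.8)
The plan is to exploit two structural constraints on block‑decomposable graphs, read off directly from the list of blocks I--V. First I would record a few elementary facts:
\begin{enumerate}
\item[(a)] inside any block every vertex has valence at most $4$, with valence $4$ occurring only at the central $\circ$‑vertex of a block of type V; hence a vertex of a block‑decomposable graph that is \emph{not} obtained by identifying two open vertices has valence $\le 4$, while a vertex obtained by identifying open vertices $v_1\in B_1$ and $v_2\in B_2$ has valence at most $\operatorname{val}_{B_1}(v_1)+\operatorname{val}_{B_2}(v_2)$ (the restriction of the gluing map to any single block is injective, and every arrow at the glued vertex descends from an arrow of $B_1$ or of $B_2$);
\item[(b)] no single block contains a double arrow and two vertices of the same block may never be matched, so both endpoints of every double arrow of a block‑decomposable graph are matched (identified) vertices;
\item[(c)] the only blocks all of whose vertices are open are of types I and II, so if every vertex of a block‑decomposable graph is a matched vertex then only blocks of types I and II occur, and then every vertex has valence at most $2+2=4$.
\end{enumerate}

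For $\mathbf{X}_7$ I would then argue as follows. Let $d$ be the central vertex, which has valence $6$. Each of the other six vertices is an endpoint of one of the three double arrows, hence is matched by (b); and $d$ is matched by (a) since $6>4$. So every vertex of $\mathbf{X}_7$ is matched, and (c) forces $\operatorname{val}(d)\le 4$, a contradiction.

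For $\mathbf{X}_6$ I would let $d$ be the valence‑$5$ vertex; its neighbours are the four endpoints $P,Q,R,S$ of the two double arrows $P\Rightarrow Q$, $R\Rightarrow S$ together with a pendant vertex $f$ carrying only the arrow $f\to d$. Again $P,Q,R,S$ are matched by (b) and $d$ is matched by (a). Now split on whether $f$ is matched. If it is, all six vertices are matched and (c) gives $\operatorname{val}(d)\le 4<5$. If $f$ is not matched, it lies in a single block $B$ and no $2$‑cycle is cancelled at it, so $\operatorname{val}_B(f)=1$, forcing $B$ to be of type I (at a $\circ$‑vertex) or of type IIIa or IIIb (at a $\bullet$‑vertex); moreover the arrow $f\to d$ comes from an arrow of $B$ between $f$ and a preimage of $d$, so $B$ is one of the two blocks $B_1,B_2$ whose identified open vertices form $d=d_1\sqcup d_2$, say $B=B_1$ with $d_1\in B_1$. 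I would then rule out each possibility: type IIIb would yield the arrow $d\to f$ rather than $f\to d$; type I would force $\operatorname{val}_{B_1}(d_1)=1$, hence the other half $d_2$ of $d$ to be the centre of a type‑V block, so that the four neighbours of $d$ other than $f$ are the (unmatched) $\bullet$‑vertices of that block and therefore induce a $4$‑cycle, contradicting that $\{P,Q,R,S\}$ induces two disjoint double edges; and type IIIa would make the second $\bullet$‑vertex of $B_1$ a further neighbour of $d$ of valence $1$, whereas in $\mathbf{X}_6$ the vertex $f$ is the unique valence‑$1$ neighbour of $d$. Each branch is contradictory, so $\mathbf{X}_6$ is not block decomposable. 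Finally, since block‑decomposability is equivalent to being of geometric type by the theorem of Fomin--Shapiro--Thurston quoted above, neither $\mathbf{X}_6$ nor $\mathbf{X}_7$ arises from a triangulated surface.

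The step I expect to be the main obstacle is the case in which the pendant vertex $f$ of $\mathbf{X}_6$ is not matched: unlike for $\mathbf{X}_7$, a bare count of vertices and arrows does not close this case, so one must pin down exactly which blocks can contribute a valence‑one pendant vertex and how this constrains the neighbourhood of $d$, taking care that cancellations of $2$‑cycles and coincidences among neighbours do not silently lower valences. The convenient lever throughout is that $\bullet$‑vertices are never matched, so their incidences in the final graph are determined entirely within their single block.
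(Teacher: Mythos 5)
Your proof is correct, but it takes a genuinely different route from the paper's. The paper works locally at the valence-$5$ vertex $x$ of ${\bf X}_6$: it first excludes blocks of type V outright (a type V block forces a $4$-cycle that cannot cancel), then observes that the pattern ``$3$ arrows in, $2$ arrows out'' at $x$ can only arise by gluing blocks IIIa${}+{}$IV or II${}+{}$IV at $x$, and kills each configuration by inspecting the neighbours of $x$ (too many valence-$1$ neighbours in the first case, too few incoming arrows at the would-be $z_i$ in the second); it then deduces the ${\bf X}_7$ statement from the ${\bf X}_6$ one via the fact that block-decomposability passes to full subgraphs. You instead prove three general counting lemmas --- unmatched vertices have valence $\le 4$ with equality only at the centre of a type V block, both endpoints of a double arrow are matched, and a graph all of whose vertices are matched uses only blocks I and II and hence has all valences $\le 4$ --- which dispose of ${\bf X}_7$ immediately and without any appeal to subgraph heredity, and reduce ${\bf X}_6$ to the single case where the pendant vertex $w$ is unmatched; there your identification of the possible blocks carrying a valence-$1$ unmatched vertex (I, IIIa, IIIb) and the resulting contradictions (wrong arrow direction for IIIb, a forced type V block and hence a $4$-cycle for I, a second valence-$1$ neighbour of $x$ for IIIa) are all sound, and your IIIa case in fact recovers the paper's IIIa${}+{}$IV argument. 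What your approach buys is a self-contained treatment of ${\bf X}_7$ and reusable valence lemmas; what the paper's buys is a shorter, more concrete enumeration anchored entirely at the one high-valence vertex. I checked the block valences (max $4$, attained only at the centre of V), the claim that only blocks I, IIIa, IIIb contain valence-$1$ vertices, and the absence of cancellation at unmatched vertices; all are as you assert.
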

\begin{proof}
Suppose that ${\bf X}_6$ is block decomposable. None of the blocks
will be of type V, since the block V contains a $4$-cycle
which will not vanish after the matching, and ${\bf X}_6$ does not contain a $4$-cycle.
We label the vertices of ${\bf X}_6$ as follows:
\begin{equation}\label{label}
\xymatrix{
& z_1 \ar[rd] & & y_2 \ar@<.5ex>[rd]\ar@<-.5ex>[rd] \\
y_1 \ar@<.5ex>[ru]\ar@<-.5ex>[ru] & & x \ar[ll]\ar[ru] & & z_2 \ar[ll]\\
& & w \ar[u] & & }
\end{equation}
At vertex $x$ there are $2$ arrows going out and $3$ arrows coming in.
To form a graph with this property from the blocks, we must either glue
blocks II and IV along $x$, or glue blocks IIIa and IV along $x$.

If we glue blocks IIIa and IV we get the following graph:
$$
\xymatrix{
\bullet\ar[rd] & & \bullet \ar[rd] & \\
& x  \ar[ru]\ar[rd] & & \ar[ll] \circ\\
\bullet\ar[ru] & & \bullet \ar[ru] & }
$$
where the open vertex ($\circ$) may be matched further with other blocks. Even
after further matching, $x$ will have at least 2 neighbors which are only connected to $x$.

If blocks II and IV are matched to form a vertex $x$, then we get the following graph
\begin{equation}\label{eq1}
\xymatrix{
& \bullet \ar[rd] & & \circ\ar[ld]\\
\circ\ar[ru]\ar[rd] & & x\ar[ll]\ar[rd] & \\
& \bullet\ar[ru] & & \circ\ar[uu]}
\end{equation}
Here the open vertices can be matched further. However, they cannot be matched
among themselves, because this would change the number of incoming
and outgoing arrows at $x$. 
In ${\bf X}_6$, $x$ has incoming arrows from $w,z_1,z_2$.
So in (\ref{eq1}), one of the vertices marked with $\bullet$ has to
correspond to $z_1$ or $z_2$. But it is clear that even after further
matching, the vertices marked with $\bullet$ will only have $1$ incoming arrow.
Contradiction.
This shows that ${\bf X}_6$ is not block decomposable. 
Therefore, ${\bf X}_6$ does not come from an a triangulation of
an oriented surface with marked points.

Since ${\bf X}_6$ is a subgraph of ${\bf X}_7$, ${\bf X}_7$ does not come
from a triangulation of an oriented surface with marked points either.
\end{proof}

\section{Mutation-finite graphs containing ${\bf X}_6$ or ${\bf X}_7$}

The following result was proven in~\cite{ABBS}. We include the short
proof for the reader's convenience.
\begin{theorem}\label{theo4}
The finite mutation classes of connected quivers with 3 vertices are:
$$
\begin{array}{rl}
{\bf A}_3: & 
\xymatrix{
& \circ \ar[rd] & \\
\circ\ar[ru] & & \circ}
\xymatrix{
& \circ\ar[rd]\ar[ld] &\\
\circ & & \circ}
\xymatrix{
& \circ & \\
\circ \ar[ru] & & \circ\ar[lu]}
\xymatrix{
& \circ \ar[rd] &\\
\circ\ar[ru] & & \circ\ar[ll]}\\
\widehat{\bf A}_2: &
\xymatrix{
& \circ\ar[rd] & \\
\circ\ar[ru]\ar[rr] & & \circ}
\xymatrix{
& \circ\ar[rd] &\\
\circ \ar[ru] & & \circ\ar@<.5ex>[ll]\ar@<-.5ex>[ll]}\\
{\bf Z}_3: & 
\xymatrix{
& \circ\ar@<.5ex>[rd]\ar@<-.5ex>[rd] & \\
\circ\ar@<.5ex>[ru]\ar@<-.5ex>[ru] & & \circ\ar@<.5ex>[ll]\ar@<-.5ex>[ll]}
\end{array}
$$

\end{theorem}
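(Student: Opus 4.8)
The plan is to classify all connected quivers on three vertices with finite mutation class by a direct case analysis based on the multiplicities of arrows. Since a quiver on three vertices is determined by the triple $(a,b,c)$ of (signed) arrow multiplicities between the three pairs of vertices, there are only finitely many ``shapes'' to consider once we bound the multiplicities; the bulk of the argument is to show that any connected $3$-vertex quiver not mutation-equivalent to one on the stated list must admit, after some sequence of mutations, an arbitrarily large arrow multiplicity, contradicting finiteness of the mutation class.

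First I would record the key computational fact about mutation at a vertex of an oriented $3$-cycle: if the quiver is a $3$-cycle with multiplicities $(a,b,c)$ (all positive, consistently oriented), then mutating at one vertex produces again a $3$-cycle with one multiplicity replaced so that the new triple is $(a,b,c)\mapsto(a, c, |ab-c|)$ up to relabeling (the Markov-type mutation). One then checks that the quantity $a^2+b^2+c^2-abc$ is invariant under these mutations. If $a,b,c\ge 1$ and the $3$-cycle is not mutation-finite, this invariant forces an unbounded orbit unless $(a,b,c)$ lies in the bounded region where $a^2+b^2+c^2-abc\le 4$ with all entries positive, i.e.\ $(a,b,c)$ is $(1,1,1)$ or $(2,2,2)$ — giving exactly the cyclic orientation of ${\bf A}_3$ and the quiver ${\bf Z}_3$. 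This disposes of the oriented-triangle case.

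Next I would treat the non-cyclic $3$-vertex quivers, i.e.\ those whose underlying graph is a path (possibly with multiple edges) or has an isolated-from-one-vertex structure. A connected acyclic quiver on three vertices has underlying graph a path; by the result of \cite{AR} already quoted in the excerpt, such a quiver is mutation-finite iff its underlying undirected graph is a Dynkin or extended Dynkin diagram, which on three vertices means ${\bf A}_3$ (all four acyclic orientations of the path, plus the non-cyclic orientations of the triangle are mutation-equivalent to these) or $\widehat{\bf A}_2$ (the $1$-$1$-oriented triangle and the path with a double edge, i.e.\ $\widehat{\bf A}_{1,2}$). One must also check that any quiver containing a double-or-higher edge together with a third vertex attached nontrivially either falls into $\widehat{\bf A}_2$ or $\widehat{\bf A}_{1,1}$-type, or else mutates to something with growing multiplicities; here the same invariant computation, applied after mutating the triangle that appears, closes the case.

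The main obstacle I expect is bookkeeping: making sure every connected $3$-vertex quiver is actually covered by one of the cases (cyclic triangle; acyclic path; ``mixed'' configurations with one double edge), and that the lists given are complete mutation classes rather than merely mutation-closed sets — i.e.\ verifying that $\widehat{\bf A}_2$ really consists of just the two displayed quivers and that ${\bf A}_3$ consists of the four displayed ones. This is a finite but slightly fiddly check; it is exactly the place where having the explicit mutation rule for $3$-cycles and the invariant $a^2+b^2+c^2-abc$ makes the argument short, which is why \cite{ABBS} (and the present paper, following it) can afford to give only a brief proof.
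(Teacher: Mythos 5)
Your overall strategy (reduce to the oriented triangle and control arrow multiplicities under mutation) is in the right spirit, but the key quantitative step is wrong. The region $\{(a,b,c): a^2+b^2+c^2-abc\le 4,\ a,b,c\ge 1\}$ is not bounded and does not characterize mutation-finiteness. For instance $(3,3,3)$ gives $27-27=0\le 4$, yet mutating the cyclic triangle with multiplicities $(3,3,3)$ at a vertex yields $(3,3,6)$, then $(3,6,15)$, and so on: the orbit is unbounded, so this quiver is mutation-infinite even though your invariant equals $0$ (the level sets of the Markov form contain infinitely many integer points). Conversely $(1,1,2)$ also satisfies your bound (it gives exactly $4$) and is mutation-finite, but it is missing from your conclusion that the only surviving triples are $(1,1,1)$ and $(2,2,2)$; it is precisely the cyclic representative of $\widehat{\bf A}_2$ displayed in the statement. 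There is a further subtlety you elide: the recursion $c\mapsto|ab-c|$ preserves $a^2+b^2+c^2-abc$ only when $ab-c\ge 0$; when $ab-c<0$ the quiver becomes acyclic and the sign of the $abc$ term flips, so your ``invariant'' is not invariant along all mutations. Finally, the claim that a connected acyclic quiver on three vertices has underlying graph a path is false: the acyclic orientations of the triangle (the first $\widehat{\bf A}_2$ picture) are counterexamples, and your identification of $\widehat{\bf A}_2$ with ``a path with a double edge'' is not the extended Dynkin diagram $\widehat{\bf A}_2$, which is the triangle.

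The paper avoids all of this with a short extremal argument: choose, within the given mutation class, a representative $\Gamma$ with the maximal number of arrows; after at most one mutation one may assume $\Gamma$ is the cyclic triangle with multiplicities $p,q,r$ where $p,q\ge r$. Mutating at the vertex between the $p$- and $q$-bundles produces $|pq-r|$ arrows on the third side, and maximality forces $pq-r\le r$, i.e.\ $pq\le 2r$. Combined with $r^2\le pq$ this gives $r\in\{0,1,2\}$, and a three-line enumeration yields exactly ${\bf A}_3$, $\widehat{\bf A}_2$ and ${\bf Z}_3$. If you want to salvage your route, you would need the correct growth criterion for mutation-cyclic $3$-vertex quivers (as in Beineke--Br\"ustle--Hille), which is more delicate than a single polynomial inequality; the extremal argument is both shorter and actually complete.
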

\begin{proof}
Suppose that $\Gamma$ is a connected graph of finite mutation type with 3 vertices.
Assume that $\Gamma$, among all the graphs in its mutation class,
has the largest  possible number of arrows.
Without loss of generality we may assume that
$\Gamma$ is of the form
\begin{equation}\label{form1}
\xymatrix{
& 2\ar[rd]^{q} &&\\
1\ar[ru]^p & & 3\ar[ll]^{r}}
\end{equation}
where $p,q,r\geq 0$ denote the number of arrows.
If $\Gamma$ is not of the form (\ref{form1}), then it is of the form
\begin{equation}\label{form2}
\xymatrix{
& 2\ar[rd]^{q} & \\
1\ar[ru]^p\ar[rr]_r & & 3}.
\end{equation}
and mutation at vertex $2$ will not decrease the number of
arrows, and we obtain a graph of the form (\ref{form1}).
Without loss of generality we may assume that 
\begin{equation}\label{eq11}
p,q\geq r.
\end{equation}
In particular, 
\begin{equation}\label{eq115}
p,q\geq 1
\end{equation}
otherwise the graph would not be connected.
After mutation at vertex $2$, we get 
\begin{equation}\label{eq12}
\xymatrix{
& 2\ar[ld]_{p} &\\
1\ar[rr]_{pq-r} & & 3\ar[lu]_{q}}
\end{equation}
Since $\Gamma$ had the maximal number of arrows, we have $pq-r\leq r$, so 
\begin{equation}\label{eq13}
pq\leq 2r.
\end{equation}
From $r^2\leq pq\leq 2r$ follows that $r=0,1,2$.
If $r=0$, then $pq=0$ by (\ref{eq13}) which contradicts (\ref{eq115}).
If $r=1$, then $pq=1$ or $pq=2$ by (\ref{eq13}).
If $pq=1$, then $p=q=1$ wich yields type ${\bf A}_3$.
If $pq=2$ then $p=2$, $q=1$ or $p=1$, $q=2$. In either case
we get type $\widehat{\bf A}_2$.
If $r=2$, then (\ref{eq11}) and (\ref{eq13}) imply that $p=q=2$
and we obtain type ${\bf Z}_3$.
\end{proof}
\begin{corollary}\label{coratmost2}
If $\Gamma$ is a graph of finite mutation type with $\geq 3$ vertices
Then the number of arrows between any $2$ vertices is at most $2$.
\end{corollary}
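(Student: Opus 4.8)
The plan is to pass to a $3$-vertex full subgraph and quote Theorem~\ref{theo4}. As in that theorem, I take $\Gamma$ to be connected (for a disconnected $\Gamma$ the assertion should be read componentwise; it genuinely fails for a component equal to $\Theta(m)$ with $m\geq 3$). Suppose for contradiction that some pair of vertices $i,j$ of $\Gamma$ is joined by $m\geq 3$ arrows. Since $\Gamma$ is connected and has a third vertex, I may choose a vertex $k\notin\{i,j\}$ that is joined by at least one arrow to $i$ or to $j$. Let $\Delta$ be the (full) subgraph of $\Gamma$ on $\{i,j,k\}$. Then $\Delta$ is connected, and it has $\geq 3$ arrows between $i$ and $j$; it remains only to see that $\Delta$ is again of finite mutation type, after which Theorem~\ref{theo4} will give the contradiction.

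For this I would first record the standard fact that a full subgraph of a mutation-finite graph is mutation-finite. The key point is that mutation commutes with restriction to a full subgraph on a vertex set $V'$, provided the mutation is performed at a vertex of $V'$: in the defining formula for $\mu_k$, the entry $b_{a,b}'$ for $a,b\in V'$ depends only on $b_{a,b}$, $b_{a,k}$ and $b_{k,b}$, all of which are entries of the matrix restricted to $V'$. Hence, by induction on the length of a mutation sequence $\mu_{i_\ell}\cdots\mu_{i_1}$ supported on $V'$, the restriction to $V'$ of $\mu_{i_\ell}\cdots\mu_{i_1}\Gamma$ equals $\mu_{i_\ell}\cdots\mu_{i_1}$ applied to the restriction of $\Gamma$ to $V'$. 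Taking $V'=\{i,j,k\}$, every isomorphism class in the mutation class of $\Delta$ occurs as a full subgraph of a member of the (finite) mutation class of $\Gamma$, so the mutation class of $\Delta$ is finite.

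Finally, by Theorem~\ref{theo4} the mutation class of the connected $3$-vertex graph $\Delta$ must be one of ${\bf A}_3$, $\widehat{\bf A}_2$, ${\bf Z}_3$. Inspecting the lists in that theorem, every graph occurring there has at most $2$ arrows between any two vertices, which contradicts the fact that $\Delta$ has $m\geq 3$ arrows between $i$ and $j$. The only substantive step is the ``full subgraph of a mutation-finite graph is mutation-finite'' lemma of the second paragraph; everything else is bookkeeping together with a glance at the $3$-vertex classification.
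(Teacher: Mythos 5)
Your proof is correct and follows essentially the same route as the paper's: pass to the full $3$-vertex subgraph on the offending pair plus a neighbor, observe it is connected and mutation-finite, and read off the bound from the classification in Theorem~\ref{theo4}. The only difference is that you spell out the (true, standard) lemma that full subgraphs inherit finite mutation type, and you flag the implicit connectedness hypothesis, both of which the paper leaves unstated.
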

\begin{proof}
Suppose $x$ and $y$ are vertices of $\Gamma$ with $p\geq 1$ arrows
from $x$ to $y$. Since $\Gamma$ is connected, there exists a vertex $z$
that is connected to $x$ or $y$.The subgraph with vertices $x,y,z$ is also
of finite mutation type. From the classification in Theorem~\ref{theo4} it is clear that
$p\leq 2$.
\end{proof}
\begin{definition}
An {\em obstructive sequence} for a graph $\Gamma$ is a sequence
of vertices $x_1,x_2,x_3,\dots,x_{\ell}$ such that the mutated graph
$$
\mu_{x_\ell}\cdots \mu_{x_2}\mu_{x_1}\Gamma
$$
has two vertices with at least $3$ arrows between them.
\end{definition}
By Corollary~\ref{coratmost2},  a graph for which an obstructive
sequence exists cannot be of finite mutation type.

\begin{lemma}\label{lem6}
If $\Gamma$ is a mutation-finite connected graph with $\geq 4$ vertices,
then $\Gamma$ cannot contain ${\bf Z}_3$ as a subgraph.
\end{lemma}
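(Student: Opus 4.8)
The plan is to argue by contradiction: suppose $\Gamma$ is mutation-finite, connected, has $\geq 4$ vertices, and contains ${\bf Z}_3$ as a (full) subgraph on vertices, say, $a,b,c$, so that there are two arrows along each of $a\to b$, $b\to c$, $c\to a$ (after relabeling). Since $\Gamma$ is connected and has at least one more vertex, there is a vertex $d$ adjacent to at least one of $a,b,c$; by symmetry of ${\bf Z}_3$ under rotation we may take $d$ adjacent to $a$. The whole strategy is then to examine the $4$-vertex full subgraph on $\{a,b,c,d\}$: it too must be mutation-finite, and by Corollary~\ref{coratmost2} every edge in it (and in every graph mutation-equivalent to it) carries at most two arrows. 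I would show that no matter how $d$ attaches to the ${\bf Z}_3$-triangle, one can write down an explicit short obstructive sequence — a mutation or two producing an edge with $\geq 3$ arrows — contradicting Corollary~\ref{coratmost2}.

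The key steps, in order: (1) Record that by Corollary~\ref{coratmost2} each of the three edges of $\Gamma$'s ${\bf Z}_3$-part has multiplicity exactly $2$ and each edge from $d$ to $\{a,b,c\}$ has multiplicity $0$, $1$, or $2$, with orientation to be determined; (2) enumerate the cases for how $d$ connects — up to the rotational/reflective symmetry of ${\bf Z}_3$ and up to global arrow reversal, the number of cases is small (which of $a,b,c$ are joined to $d$, with what multiplicities and orientations); (3) in each case perform mutation at a well-chosen vertex of the triangle — the natural candidate is the vertex "between" $d$ and the rest, i.e. mutate at $a$ — and compute the resulting multiplicities using the graph mutation rule (new arrows $i\to j$ for each path $i\to a\to j$, then reverse arrows at $a$, then cancel $2$-cycles); (4) observe that because the triangle is an oriented $3$-cycle with all multiplicities $2$, mutating at $a$ turns the edge $b\!-\!c$ into an edge of multiplicity $2\cdot 2 - 2 = 2$ again but with a definite orientation, and meanwhile the composite arrows through $a$ pile onto the $b$–$d$ or $c$–$d$ edge; tracking these carefully, one finds in every case an edge whose multiplicity becomes $\geq 3$, either immediately or after one further mutation. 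Step (3)–(4) is really a finite check on $4$-vertex quivers, exactly the sort of thing the paper elsewhere delegates to Keller's applet, but here it is short enough to do by hand.

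The main obstacle is the case analysis in steps (2)–(4): one has to be careful that the symmetry reductions are valid (the full symmetry group of the oriented ${\bf Z}_3$-triangle is cyclic of order $3$, not dihedral, since reversing a single edge is not allowed, though reversing all arrows of the ambient $\Gamma$ is a legitimate symmetry), and that when $d$ is joined to two or three of $a,b,c$ the $2$-cycle cancellations after mutation are done correctly. A secondary subtlety: it is not enough to find an obstructive sequence for the subgraph $\{a,b,c,d\}$ in the abstract — one must note that an obstructive sequence for a full subgraph lifts to one for $\Gamma$, because mutation commutes with passing to full subgraphs on a vertex set containing the mutated vertex. I would state that lifting principle as a one-line remark (it follows directly from the local description of graph mutation) and then the lemma follows: the obstructive sequence inside $\{a,b,c,d\}$ shows $\Gamma$ itself is not of finite mutation type, contradicting the hypothesis.
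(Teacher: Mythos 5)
Your strategy is sound and would prove the lemma, but it is a genuinely different -- and considerably heavier -- route than the one the paper takes. You reduce to the $4$-vertex full subgraph on $\{a,b,c,d\}$ and then propose to enumerate all attachments of $d$ to the ${\bf Z}_3$-triangle and exhibit an obstructive sequence in each case; the observations you flag along the way (multiplicities are at most $2$ by Corollary~\ref{coratmost2}, the symmetry group of the oriented triangle is cyclic of order $3$, and an obstructive sequence for a full subgraph certifies that the ambient graph is mutation-infinite because mutation at a vertex of the subgraph commutes with restriction) are all correct and necessary for that plan. The paper instead avoids every mutation computation: with the triangle labelled $x_1\Rightarrow x_2\Rightarrow x_3\Rightarrow x_1$ and the extra vertex $y$ attached (say) to $x_1$, it applies Theorem~\ref{theo4} to the two $3$-vertex full subgraphs $\{y,x_1,x_2\}$ and $\{y,x_1,x_3\}$. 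Each of these is connected and contains a double arrow, so each must be the oriented-triangle representative of $\widehat{\bf A}_2$ or ${\bf Z}_3$ itself; in either shape the triangle is cyclically oriented, so the first subgraph forces all $y$--$x_1$ arrows to point from $y$ to $x_1$ while the second forces them to point from $x_1$ to $y$, an immediate contradiction. What the paper's argument buys is that the entire case analysis collapses into one orientation constraint applied twice; what your argument buys is independence from the precise statement of the $3$-vertex classification (you only need the ``at most $2$ arrows'' corollary plus direct computation). The one caveat on your version: the finite check in your steps (2)--(4) is asserted rather than carried out, and it comprises essentially all of the work -- roughly a dozen cases after symmetry reduction -- so as written the proposal is a credible plan rather than a complete proof.
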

\begin{proof}
Suppose $\Gamma$ is a mutation-finite connected graph with $\geq 4$ vertices
containing ${\bf Z}_3$. Then $\Gamma$ has a mutation-finite connected
subgraph with exactly $4$ vertices containing ${\bf Z}_3$.
Without loss of generality we may assume that $\Gamma$ has $4$ vertices.
We label the vertices of ${\bf Z}_3$ as follows
$$
\xymatrix{
& x_2\ar@<.5ex>[rd]\ar@<-.5ex>[rd] & \\
x_1\ar@<.5ex>[ru]\ar@<-.5ex>[ru] & & x_3\ar@<.5ex>[ll]\ar@<-.5ex>[ll]}
$$
Let $y$ be the $4$-th vertex of $\Gamma$.
Now $y$ is connected to at least one of the vertices of ${\bf Z}_3$, say to $x_1$.
Because the subgraph with vertices $\{y,x_1,x_2\}$ is of finite mutation type, 
all arrows must go from $y$ to $x_1$ and not in the opposite direction by Theorem~\ref{theo4}.
But because the subgraph with vertices $\{y,x_1,x_3\}$ is of finite mutation type, 
all arrows must go from $x_1$ to $y$. Contradiction.
\end{proof}
\begin{corollary}\label{cor8}
If $\Gamma$ is a mutation-finite graph with $\geq 4$ vertices, then every
connected subgraph with $3$ vertices must be of type ${\bf A}_3$ or $\widehat{\bf A}_2$.
\end{corollary}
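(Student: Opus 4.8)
The plan is to read this off from Theorem~\ref{theo4} and Lemma~\ref{lem6}. The one auxiliary fact I would invoke is that a (full) subgraph of a mutation-finite graph is itself mutation-finite: if $\Delta$ is the full subgraph of $\Gamma$ supported on a set of vertices $S$, then for $k\in S$ the operation $\mu_k$ commutes with restricting to the full subgraph on $S$, so the mutation class of $\Delta$ maps into the finite set obtained by restricting the graphs in the mutation class of $\Gamma$ to $S$. This is already used implicitly in the proofs of Theorem~\ref{theo4} and Lemma~\ref{lem6}, so I would simply quote it.

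Given that, let $\Delta$ be any connected full subgraph of $\Gamma$ with $3$ vertices. By the previous paragraph $\Delta$ is a connected mutation-finite graph on $3$ vertices, so Theorem~\ref{theo4} says $\Delta$ is mutation-equivalent to one of ${\bf A}_3$, $\widehat{\bf A}_2$, ${\bf Z}_3$. It remains to exclude ${\bf Z}_3$. Assuming $\Delta\cong{\bf Z}_3$, I would enlarge $\Delta$ to a connected $4$-vertex full subgraph: since $\Gamma$ is connected with at least $4$ vertices, walking along a path out of $\Delta$ produces a vertex $y\notin\Delta$ adjacent to some vertex of $\Delta$, and the full subgraph $\Delta'$ on the four vertices of $\Delta\cup\{y\}$ is connected, mutation-finite, has exactly $4$ vertices, and contains ${\bf Z}_3$. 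This contradicts Lemma~\ref{lem6}. Hence $\Delta$ is of type ${\bf A}_3$ or $\widehat{\bf A}_2$, which is the claim.

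There is no real obstacle here; the substance is entirely contained in Theorem~\ref{theo4} and Lemma~\ref{lem6}, and the corollary is just a repackaging of them. The only thing to be slightly careful about is that Lemma~\ref{lem6} requires a \emph{connected} ambient graph with $\ge 4$ vertices, so one must first manufacture such a graph containing the hypothetical copy of ${\bf Z}_3$ — which is exactly the role of the vertex $y$ above and is where connectedness of $\Gamma$ (implicit in the setting of this section) enters. I would keep the final write-up to three or four lines.
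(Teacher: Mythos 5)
Your proposal is correct and is exactly the argument the paper intends: the corollary is stated without proof as an immediate consequence of Theorem~\ref{theo4} and Lemma~\ref{lem6}, which is precisely what you spell out (including the minor point that connectedness of the ambient graph is needed to extend a hypothetical ${\bf Z}_3$ to a connected $4$-vertex subgraph). No issues.
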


\begin{theorem}\label{theo9}
The only connected mutation-finite quiver with 7 vertices containing ${\bf X}_6$ is ${\bf X}_7$. 
\end{theorem}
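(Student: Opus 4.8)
The plan is to add a seventh vertex $u$ to ${\bf X}_6$ — labelled as in~(\ref{label}) — in all possible ways consistent with finite mutation type, and to show that every such $\Gamma$ either is mutation-equivalent to ${\bf X}_7$ or admits an obstructive sequence (so cannot be mutation-finite by Corollary~\ref{coratmost2}). The first reduction is to record the constraints Corollary~\ref{cor8} imposes: every connected $3$-vertex subgraph of $\Gamma$ must be of type ${\bf A}_3$ or $\widehat{\bf A}_2$, so in particular $\Gamma$ contains no ${\bf Z}_3$, and by Corollary~\ref{coratmost2} there are at most $2$ arrows between any two vertices. This already forbids $u$ from being joined by a double arrow to any vertex that is itself an endpoint of a double arrow in ${\bf X}_6$ (namely $y_1,y_2,z_2$), since that would create a ${\bf Z}_3$; and it constrains how many arrows $u$ may receive from or send to each $x_i$, $y_i$, $z_i$, $w$.

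Next I would organize the case analysis by which vertices of ${\bf X}_6$ are adjacent to $u$. Since $\Gamma$ is connected, $u$ is adjacent to at least one of the six vertices; I would first treat the case where $u$ is adjacent to the central vertex $x$, then the cases where it is adjacent only to peripheral vertices. For each candidate adjacency pattern, the ${\bf A}_3/\widehat{\bf A}_2$ condition on the triangles through $u$ pins down the orientations and multiplicities quite rigidly — for instance, if $u\to x$ then looking at the triangles $\{u,x,y_1\}$, $\{u,x,z_1\}$, etc., forces a consistent "flow" pattern, and usually only the configuration realizing ${\bf X}_7$ survives. For the surviving configuration one checks directly (by hand or via the applet~\cite{Keller}, exactly as in the proof of the Proposition) that $\Gamma\cong{\bf X}_7$. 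For every other configuration one exhibits an explicit short mutation sequence after which two vertices are joined by $\geq 3$ arrows; the natural mutations to try are $\mu_x$, or $\mu_{x}\mu_{x_i}$ where $x_i$ is a peripheral vertex carrying a double arrow, since those are the places where arrow-count can blow up, mirroring the computation $p,q\ge r\Rightarrow pq-r\le r$ from the proof of Theorem~\ref{theo4}.

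The main obstacle is the sheer size of the case analysis: there are many a priori adjacency patterns for $u$, and for each one must either identify a finite obstructive sequence or verify mutation-equivalence to ${\bf X}_7$. To keep this manageable I would exploit the symmetry of ${\bf X}_6$ — it has a visible $\Z/2$ automorphism swapping the two "wings" — to halve the cases, and I would use the already-known $5$-element mutation class of ${\bf X}_6$ to replace awkward adjacency patterns by equivalent ones on a more convenient representative of the class. The bookkeeping is routine once the framework is set up; the only genuinely delicate point is making sure no obstructive sequence is overlooked in the borderline cases where $u$ is attached by single arrows to several peripheral vertices at once, and there I would lean on a direct applet verification that the mutation class stays finite precisely in the ${\bf X}_7$ case.
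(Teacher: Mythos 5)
Your overall framework is the same as the paper's: attach a seventh vertex $u$ to ${\bf X}_6$ labelled as in~(\ref{label}), use Corollary~\ref{cor8} to restrict the possible local configurations at $u$, and kill every configuration other than ${\bf X}_7$ by exhibiting an obstructive sequence (invoking Corollary~\ref{coratmost2}). The paper's proof is exactly this, organized into six cases according to how many of the six vertices of ${\bf X}_6$ the vertex $u$ is adjacent to, with an explicit obstructive sequence written down for each surviving subcase (e.g.\ $w,x,y_1,z_1,w,u$ when $u$ is attached to $x$ alone, $u,x,y_1,y_2,x,w,x,z_2,y_1$ when $u$ is attached to $y_1$ and $z_1$, and so on).

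The gap is that you never carry out this analysis: the entire mathematical content of the theorem consists of (i) the enumeration of the admissible attachment patterns --- which is not as automatic as you suggest, since the $\widehat{\bf A}_2$ class contains the cyclic triangle with one double arrow, so triangles through $u$ are \emph{not} always forced into a single ``flow'' pattern and one genuinely gets several surviving configurations per case --- and (ii) the explicit obstructive sequences, some of which have length up to nine and are not found by simply mutating at $x$ or at a neighbor of a double arrow. Declaring this ``routine bookkeeping'' and falling back on ``a direct applet verification that the mutation class stays finite precisely in the ${\bf X}_7$ case'' is not a proof: the applet can confirm that one particular $\Gamma$ is mutation-finite or exhibit growth for one particular $\Gamma$, but the theorem is a universal statement over all $7$-vertex extensions, and the finite list of candidates to feed to the applet is precisely what your write-up fails to produce. (Two smaller points: the endpoints of double arrows in ${\bf X}_6$ are $y_1,z_1,y_2,z_2$, not $y_1,y_2,z_2$; and a double arrow from $u$ into such a vertex is excluded not because it creates a ${\bf Z}_3$ but because a path of two double arrows on three vertices is already absent from the list in Theorem~\ref{theo4}.)
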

\begin{proof}
Suppose that $\Gamma$ is a mutation finite quiver containin ${\bf X}_6$.
We label the vertices of ${\bf X}_{6}$ as shown in (\ref{label}),
and denote the other vertex by $u$.
Since $\Gamma$ is connected, $u$ is connected to at least 1 other vertex.

{\bf case 1:} Suppose that $u$ is connected to 1 vertex of ${\bf X}_6$. 
If $u$ is connected to $y_1$ or $z_1$ then the subgraph with vertices
$\{u,y_1,z_1\}$ is not of type ${\bf A}_3$ or $\widehat{A}_2$, contradicting Corollary~\ref{cor8}.
Similarly $u$ is not connected to $y_2$ or $z_2$. 

Suppose $u$ is connected to $x$. After mutation at $u$ we may assume that
arrows go from $u$ to $x$. From Corollary~\ref{cor8} applied to  the subgraph
with vertices $\{u,x,w\}$ follows that there can be only 1 arrow.

Now $w,x,y_1,z_1,w,u$ is a obstructive sequence.

Suppose that $u$ is connected to $w$. Without loss of generality we may assume
that arrows go from $u$ to $w$. By Corollary~\ref{cor8} applied to the subgraph
with vertices $\{u,w,x\}$ there is at most 1 arrow.
Now $x,w,y_1,y_2,z_1,z_2,x,w,y_2,$ is a obstructive sequence.

attached to $X_{6}$ by only one set of arrows.  

{\bf Case 2:}
Suppose that $u$ is connected to 2 vertices. If $u$ is connected to $y_1$ or $z_1$,
then the only possibility is that there is one arrow from $u_1$ to $y_1$ and one
arrow from $z_1$ to $u_1$. An obstructive sequence is $u,x,y_1,y_2.x,w,x,z_2,y_1$.
Similarly, $u$ cannot be connected to $y_2$ or $z_2$.

Therefore, $u$ must be connected to $w$ and $x$. The only cases that avoid
a connected subgraph with 3 vertices not of type ${\bf A}_3$ or $\widehat{\bf A}_2$ are:
$$
{\bf (a)}:\xymatrix@-1.2pc{
& u\ar[rd] & \\
x\ar[ru] & & w\ar[ll]
}\quad
{\bf (b):}
\xymatrix@-1.2pc{
& u\ar[ld]\ar[rd]& \\
x & & w\ar[ll]
}\quad
{\bf (c):}
\xymatrix@-1.2pc{
& u &\\
x\ar[ru] & & w\ar[ll]\ar[lu]}
\quad {\bf (d):}
\xymatrix@-1.2pc{
& u \ar@<.5ex>[rd]\ar@<-.5ex>[rd] & \\
x\ar[ru] & & w\ar[ll]}
$$
Case (a) reduces to Case 1 after mutation at vertex $u$.
Cases (b) and (c) give isomorphic graphs.
Case (b) has an obstructive sequence $w,x,u$.
Case (d) gives us the graph ${\bf X}_7$.

\emph{Case 3: 3 attachments}

Vertex u must be attached to either vertices $y_1$ and $z_1$ or vertices $y_2$ and $z_2$. 
Without loss of generality, we may assume that $u$ is connected to both $y_1$ and $z_1$.
There is one arrow from $u$ to $y_1$ and one arrow from $z_1$ to $u$.
The vertex $u$ is also connected to either $w$ or $x$.
There are $4$ cases:
$$
{\bf (a)}:\xymatrix{
u\ar[d] \\ x}\quad
{\bf (b)}:
\xymatrix{u \ar[d] \\ w} \quad
{\bf (c)}:
\xymatrix{ u \\ x\ar[u]}\quad
{\bf (d)}
\xymatrix{ u \\ w\ar[u]}
$$
Case (a) has the obstructive sequence $u,x,z_1$. Case (b) has the obstructive sequence $u,x,z_1,w,x$.
Case (c) has the obstructive sequence $x,w,u,y_1$. Case (d) has the obstructive sequence $u,x,z_2,y_1,w$.

{\bf Case 4:}
 Suppose that $u$ is connected to $4$ vertices.
 If $u$ is connected to $y_1,z_1,y_2,z_2$ then there is an arrow from
 $u$ to $y_1$ and to $y_2$ and arrows from $z_1$ and $z_2$ to $u$.
 An obstructive sequence is $u,x,w,y_2,z_1$.
  
 Otherwise, $u$ must be connected to either $y_1$ and $z_1$, or to $y_2$ and $z_2$,
 but not both. Without loss of generality we may assume that
 $u$ is connected to $y_1$ and $z_1$.
 Now $u$ is also connected to $w$ and $x$.
 The only cases that avoid
a connected subgraph with 3 vertices not of type ${\bf A}_3$ or $\widehat{\bf A}_2$ are:
$$
{\bf (a)}:\xymatrix@-1.2pc{
& u\ar[rd] & \\
x\ar[ru] & & w\ar[ll]
}\quad
{\bf (b):}
\xymatrix@-1.2pc{
& u\ar[ld]\ar[rd]& \\
x & & w\ar[ll]
}\quad
{\bf (c):}
\xymatrix@-1.2pc{
& u &\\
x\ar[ru] & & w\ar[ll]\ar[lu]}
\quad {\bf (d):}
\xymatrix@-1.2pc{
& u \ar[ld] & \\
x & & w\ar[ll]\ar[lu]}
$$
Case (a) has the obstructive sequence $u,x,y_1$. Case (b) has the obstructive sequence $u,x,z_1$.
Case (c) has the obstructive sequence $u,x$. And case (d) is mutation equivalent to case (c)
via the mutation at $w$.

{\bf Case 5}: 
Suppose that $u$ is connected to $5$ of the vertices of ${\bf X}_6$.
Then $u$ must be connected to $y_1,z_1,y_2,z_2$, with arrows
going from $u$ to $y_1$ and $y_2$ and arrows going from $z_1$ and $z_2$ to $u$.
Now $u$ must be connected to either $x$ or $w$. There are $4$ subcases:
$$
{\bf (a)}:\xymatrix{
u\ar[d] \\ x}\quad
{\bf (b)}:
\xymatrix{u \ar[d] \\ w} \quad
{\bf (c)}:
\xymatrix{ u \\ x\ar[u]}\quad
{\bf (d)}
\xymatrix{ u \\ w\ar[u]}
$$
Case (a) has the  obstructive sequence $u,x$. Case (b) has the obstructive sequence $u,x,w$.
Case (c) has the  obstructive sequence $u,x$. Case (d) has the obstructive sequence $u,x,y_1,y_2$.

{\bf Case 6:}
The vertex $u$ is connected to all $6$ vertices of ${\bf X}_6$.
There must be arrows from $u$ to $y_1$ and $y_2$ and arrows from $z_1$ and $z_2$ to $u$.
The only possibilities to connect $u$ to $x$ and $w$ that avoid
a connected subgraph with 3 vertices not of type ${\bf A}_3$ or $\widehat{\bf A}_2$ are:
$$
{\bf (a)}:\xymatrix@-1.2pc{
& u\ar[rd] & \\
x\ar[ru] & & w\ar[ll]
}\quad
{\bf (b):}
\xymatrix@-1.2pc{
& u\ar[ld]\ar[rd]& \\
x & & w\ar[ll]
}\quad
{\bf (c):}
\xymatrix@-1.2pc{
& u &\\
x\ar[ru] & & w\ar[ll]\ar[lu]}
\quad {\bf (d):}
\xymatrix@-1.2pc{
& u \ar[ld] & \\
x & & w\ar[ll]\ar[lu]}
$$
Case (a) has the obstructive sequence $x,y_1,y_2$. Case (b) has the obstructive sequence
$x,z_1,z_2$. Case 3 has  the obstructive sequence $x,y_1,y_2$. Case 4 has the  obstructive sequence
$x,z_1,z_2$.

Therefore the only connected mutation-finite quiver with 7 vertices containing $X_{6}$ is $X_{7}$.
\end{proof}
\begin{theorem}\label{theo10}
There is no connected mutation-finite quiver with $\geq 8$ vertices containing ${\bf X}_{7}$.
\end{theorem}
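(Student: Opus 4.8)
The plan is to reduce to an $8$-vertex graph and then apply Theorem~\ref{theo9} twice. Suppose $\Gamma$ is a connected mutation-finite quiver with at least $8$ vertices containing ${\bf X}_7$. By the same reduction used at the start of the proof of Lemma~\ref{lem6}, $\Gamma$ then contains a connected mutation-finite full subgraph on precisely $8$ vertices containing ${\bf X}_7$, so we may assume $\Gamma$ has $8$ vertices. Label the vertices of the copy of ${\bf X}_6$ sitting inside ${\bf X}_7$ as in (\ref{label}), let $v$ be the seventh vertex of ${\bf X}_7$ (so $x\to v$ and $v\Rightarrow w$), and let $u$ be the eighth vertex of $\Gamma$. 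Then ${\bf X}_7$ has a central vertex $x$ with three symmetric \emph{arms} $\{y_1,z_1\}$, $\{y_2,z_2\}$, $\{v,w\}$, where an arm $\{a,b\}$ carries the arrows $x\to a$, $a\Rightarrow b$, $b\to x$. I would first record two elementary facts about ${\bf X}_7$: (i) deleting the ``source'' vertex $a$ of any one arm produces a graph isomorphic to ${\bf X}_6$ (so ${\bf X}_6$ occurs as a full subgraph of ${\bf X}_7$ in the three ways ${\bf X}_7\setminus\{v\}$, ${\bf X}_7\setminus\{y_1\}$, ${\bf X}_7\setminus\{y_2\}$); and (ii) in ${\bf X}_7$ the vertex $x$ is the unique vertex with more than $3$ incident arrows and it has $6$, whereas ${\bf X}_6$ has a vertex with $5$ incident arrows and a vertex with only $1$. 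Since $\Gamma$ is connected, $u$ is joined to at least one vertex of ${\bf X}_7$, and I would distinguish two cases according to whether $u$ is joined to some vertex of the subgraph ${\bf X}_6={\bf X}_7\setminus\{v\}$.

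\textbf{Case 1: $u$ is joined only to $v$.} Here I would pass to the full subgraph $\Gamma\setminus\{y_1\}$. It has $7$ vertices, is mutation-finite, is connected (it still contains $v$, which is joined to $u$), and contains ${\bf X}_6$ by fact (i); by Theorem~\ref{theo9} it is therefore isomorphic to ${\bf X}_7$. But in $\Gamma\setminus\{y_1\}$ the vertex $x$ is not joined to $u$, so it still has only $5$ incident arrows, contradicting fact (ii). Hence Case~1 cannot occur.

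\textbf{Case 2: $u$ is joined to some vertex of ${\bf X}_6={\bf X}_7\setminus\{v\}$.} Then $\Gamma\setminus\{v\}$ is a connected mutation-finite $7$-vertex graph containing ${\bf X}_6$, hence isomorphic to ${\bf X}_7$ by Theorem~\ref{theo9}. Since $x$ has at least $5$ incident arrows in $\Gamma\setminus\{v\}$, fact (ii) forces $x$ to be the central vertex with exactly $6$ incident arrows, so $u$ is joined to $x$ by exactly one arrow; moreover $y_1,z_1,y_2,z_2$ already have $3$ incident arrows, so $u$ is joined to none of them, while $w$ has only $1$ in ${\bf X}_6$ and must reach $3$, so there are exactly two arrows between $u$ and $w$. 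As $\{y_1,z_1\}$ and $\{y_2,z_2\}$ are arms of this copy of ${\bf X}_7$ (being the only pairs joined by a double arrow), the remaining arm must be $\{u,w\}$; since the arrow $w\to x$ is already present, $w$ is the sink of that arm, which forces $x\to u$ and $u\Rightarrow w$. Now consider the full subgraph of $\Gamma$ on $\{u,v,w\}$: it contains $u\Rightarrow w$ (just found) and $v\Rightarrow w$ (from ${\bf X}_7$), so $w$ is a sink receiving at least $4$ arrows. But by Theorem~\ref{theo4} the connected mutation-finite quivers on $3$ vertices are exactly ${\bf A}_3$, $\widehat{\bf A}_2$, ${\bf Z}_3$, and none of them has a sink receiving more than $2$ arrows. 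This contradicts mutation-finiteness of $\Gamma$, so Case~2 cannot occur either, and the theorem follows.

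The step I expect to be the main obstacle is verifying that this dichotomy really covers every way the eighth vertex $u$ can attach to ${\bf X}_7$: a priori there are many configurations, and the key is that in ${\bf X}_7$ every vertex other than $x$ has exactly $3$ incident arrows (and $x$ has $6$), which together with Theorem~\ref{theo9} either yields a contradiction immediately or forces $u$ to be a single new arm at $w$, at which point Theorem~\ref{theo4} finishes the argument. The supporting combinatorial claims to verify carefully are fact (i) and the identification, in Case~2, of the three arms of the copy of ${\bf X}_7$ produced by Theorem~\ref{theo9}.
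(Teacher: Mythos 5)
Your proof is correct and follows essentially the same route as the paper: reduce to $8$ vertices, apply Theorem~\ref{theo9} to the $7$-vertex full subgraph obtained by adjoining $u$ to a copy of ${\bf X}_6$ inside ${\bf X}_7$, conclude that $u$ must carry a double arrow into $w$ ($=z_3$ in the paper's labeling), and then contradict the $3$-vertex classification of Theorem~\ref{theo4} on $\{u,v,w\}=\{u,y_3,z_3\}$. The only difference is cosmetic: the paper disposes of your Case~1 by a symmetry relabeling of the three arms of ${\bf X}_7$ rather than by a separate degree-count argument.
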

\begin{proof}
Suppose that $\Gamma$ is a connected graph with $\geq 8$ vertices containing ${\bf X}_7$.
We will show that this will lead to a contradiction.
The graph  $\Gamma$ contains a connected subgraph with exactly $8$ vertices
which contains ${\bf X}_7$. So without loss of generality we may assume that $\Gamma$
has exactly $8$ vertices.
Let us label the vertices of ${\bf X}_7$ as follows:
$$
\xymatrix{
& z_1\ar[rd] & & y_2\ar@<.5ex>[rd]\ar@<-.5ex>[rd] \\
y_1 \ar@<.5ex>[ru]\ar@<-.5ex>[ru] & & x \ar[ll]\ar[ru]\ar[rd] & & z_2\ar[ll]\\
& z_3\ar[ru] & & y_3\ar@<.5ex>[ll]\ar@<-.5ex>[ll] & }
$$
Suppose that $u$ is the $8$-th vertex of $\Gamma$.
Because of the symmetry, we may assume, without loss of generality that $
u$ is connected to $x$, $y_1$ or $z_1$.
The subgraph with vertices $\{x,y_1,z_1,y_2,z_2,z_3,u\}$ is connected
and  contains ${\bf X}_7$. By Theorem~\ref{theo9} this graph must be isomorphic
to ${\bf X}_7$. This means that there must be $2$ arrows from $u$ to $z_3$.
Now the subgraph with vertices  $y_3,z_3,u$ cannot be of finite mutation type
because of Theorem~\ref{theo4}.
\end{proof}
\begin{corollary}
No graph of one of the 9 exceptional types contains
$X_{6}$ or $X_{7}$ as a subgraph.
\end{corollary}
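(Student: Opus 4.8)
The plan is to package Theorems~\ref{theo9} and~\ref{theo10} into a single structural statement and then invoke the corollary, stated just after the proposition introducing ${\bf X}_6$ and ${\bf X}_7$, which says that neither ${\bf X}_6$ nor ${\bf X}_7$ is mutation-equivalent to any of the nine exceptional types. Concretely, the statement I would prove first is: \emph{if $\Gamma$ is a connected mutation-finite graph containing ${\bf X}_6$ as a full subgraph, then $\Gamma$ is isomorphic to ${\bf X}_6$ or to ${\bf X}_7$.}

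For this structural statement, let $n$ be the number of vertices of $\Gamma$, so $n \geq 6$. If $n = 6$, then $\Gamma = {\bf X}_6$. If $n = 7$, Theorem~\ref{theo9} yields $\Gamma \cong {\bf X}_7$. Suppose $n \geq 8$. Since $\Gamma$ is connected and ${\bf X}_6$ is a connected subgraph on $6$ vertices, there is a vertex $v$ of $\Gamma$ outside ${\bf X}_6$ that is joined by an arrow to some vertex of ${\bf X}_6$ (take the first vertex off ${\bf X}_6$ along a path in $\Gamma$ from ${\bf X}_6$ to any remaining vertex). The full subgraph on the $7$ vertices consisting of those of ${\bf X}_6$ together with $v$ is connected, contains ${\bf X}_6$, and is mutation-finite, since a full subgraph of a mutation-finite graph is again mutation-finite (a fact used repeatedly in the arguments above). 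By Theorem~\ref{theo9} this $7$-vertex subgraph is isomorphic to ${\bf X}_7$, so $\Gamma$ is a connected mutation-finite graph on $\geq 8$ vertices containing ${\bf X}_7$, contradicting Theorem~\ref{theo10}. Hence the case $n \geq 8$ does not occur, and the structural statement follows.

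Finally, recall that ${\bf X}_6$ is a subgraph of ${\bf X}_7$, so a graph containing ${\bf X}_6$ or ${\bf X}_7$ in particular contains ${\bf X}_6$. Each of the nine exceptional graphs is connected and of finite mutation type; if such a graph $G$ contained ${\bf X}_6$, the structural statement would force $G \cong {\bf X}_6$ or $G \cong {\bf X}_7$, whence ${\bf X}_6$ or ${\bf X}_7$ would be mutation-equivalent to that exceptional type, contradicting the earlier corollary. This proves the claim. The argument is essentially vertex-count bookkeeping on top of Theorems~\ref{theo9} and~\ref{theo10}; the only point needing a little care is the extraction of a $7$-vertex connected subgraph when $n \geq 8$, which is what lets Theorem~\ref{theo10} be applied, so I do not anticipate a genuine obstacle.
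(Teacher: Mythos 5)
Your proof is correct and follows exactly the route the paper intends: the paper states this corollary without proof as an immediate consequence of Theorems~\ref{theo9} and~\ref{theo10} together with the earlier corollary that ${\bf X}_6$ and ${\bf X}_7$ are not mutation-equivalent to the nine exceptional types. Your explicit handling of the $n\geq 8$ case (extracting a connected $7$-vertex full subgraph containing ${\bf X}_6$ and applying Theorem~\ref{theo9} before invoking Theorem~\ref{theo10}) fills in the bookkeeping the paper leaves implicit.
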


\section{Conclusion}

We have exhibited two graphs which are of finite mutation type, but which are not of geometric type or 
 isomorphic to the 9 known exceptional cases. It is natural to ask whether there are any more
 exceptional graphs of finite mutation type.
 \begin{problem}
 Is it true that every  finite mutation class of graphs with $\geq 3$ vertices
 which is not of geometric type is one of the following 11 mutation classes:
 $$
 {\bf E}_6,{\bf E}_7, {\bf E}_8, \widehat{\bf E}_6,\widehat{\bf E}_7,\widehat{\bf E}_8, 
 {\bf E}_6^{(1,1)},{\bf E}_7^{(1,1)},{\bf E}_8^{(1,1)}, {\bf X}_6,{\bf X}_7?
 $$
 \end{problem}
\noindent{\bf Acknowledgements.}
The authors would like to thank Ralf Spatzier for bringing them together in this REU project. 
The second author
likes to thank Tracy Payne for getting him interested in the REU program,
and Christine Betz Bolang for help getting into the REU program.

\end{document}